\def\frk{\frak}               
\def\Phi{{\frk n}}
\def\Phi{{\frk N}}
\def\opn#1#2{\def#1{\operatorname{#2}}} 
\opn\chara{char} \opn\length{\ell} \opn\pd{pd} \opn\rk{rk}
\opn\projdim{proj\,dim} \opn\injdim{inj\,dim} \opn\rank{rank}
\opn\depth{depth} \opn\grade{grade} \opn\height{height}
\opn\embdim{emb\,dim} \opn\codim{codim}
\opn\Tr{Tr} \opn\bigrank{big\,rank}
\opn\superheight{superheight}\opn\lcm{lcm}
\opn\trdeg{tr\,deg}
\opn\reg{reg} \opn\lreg{lreg} \opn\ini{in} \opn\lpd{lpd}
\opn\size{size}
\opn\div{div} \opn\Div{Div} \opn\cl{cl} \opn\Cl{Cl}
\opn\Spec{Spec} \opn\Supp{Supp} \opn\supp{supp} \opn\Sing{Sing}
\opn\Ass{Ass} \opn\Min{Min}
\opn\Ann{Ann} \opn\Rad{Rad} \opn\Soc{Soc}
\opn\Im{Im} \opn\Ker{Ker} \opn\Coker{Coker} \opn\Am{Am}
\opn\Hom{Hom} \opn\Tor{Tor} \opn\Ext{Ext} \opn\End{End}
\opn\Aut{Aut} \opn\id{id}
\opn\nat{nat}
\opn\pff{pf}
\opn\Pf{Pf} \opn\GL{GL} \opn\SL{SL} \opn\mod{mod} \opn\ord{ord}
\opn\Gin{Gin} \opn\Hilb{Hilb}
\opn\aff{aff} \opn\con{conv} \opn\relint{relint} \opn\st{st}
\opn\lk{lk} \opn\cn{cn} \opn\core{core} \opn\vol{vol}
\opn\link{link} \opn\star{star}
\opn\gr{gr}
\def\pot#1#2{#1[\kern-0.28ex[#2]\kern-0.28ex]}
\opn\dirlim{\underrightarrow{\lim}}
\opn\inivlim{\underleftarrow{\lim}}
\def\Implies{\ifmmode\Longrightarrow \else
        \unskip${}\Longrightarrow{}$\ignorespaces\fi}
\def\implies{\ifmmode\Rightarrow \else
        \unskip${}\Rightarrow{}$\ignorespaces\fi}
\def\iff{\ifmmode\Longleftrightarrow \else
        \unskip${}\Longleftrightarrow{}$\ignorespaces\fi}
\newtheorem{Theorem}{Theorem}[section]
\newtheorem{Lemma}[Theorem]{Lemma}
\newtheorem{Remark}[Theorem]{Remark}
\newtheorem{Example}[Theorem]{Example}
\let\epsilon\varepsilon
\let\phi=\varphi
\let\kappa=\varkappa
\def\qed{\ifhmode\textqed\fi
      \ifmmode\ifinner\quad\qedsymbol\else\dispqed\fi\fi}
\def\textqed{\unskip\nobreak\penalty50
       \hskip2em\hbox{}\nobreak\hfil\qedsymbol
       \parfillskip=0pt \finalhyphendemerits=0}
\def\dispqed{\rlap{\qquad\qedsymbol}}
\opn\dis{dis}
\def\pnt{{\raise0.5mm\hbox{\large\bf.}}}
\opn\Lex{Lex}
\begin{document}

\title{Janet's Algorithm}

\author{Imran Anwar}

\thanks{The author is highly grateful to the Abdus Salam School of Mathematical Sciences,
  GC University, Lahore, Pakistan in supporting and facilitating this
  research. The author  would like to thank   Prof. J\"{u}rgen Herzog for introducing the
  idea and encouragement.}

\address{Imran Anwar, Abdus Salam School of Mathematical Sciences, 68-B New Muslim Town,
    Lahore,Pakistan.}\email{iimrananwar@gmail.com}
 \maketitle

\begin{abstract}
We have introduced the Janet's algorithm for the Stanley
decomposition of a monomial ideal $I\subset S = K[x_1, . . . ,x_n]$
and prove that Janet's algorithm gives the squarefree Stanley
decomposition of $S/I$ for a squarefree monomial ideal $I$. We have
also shown that the Janet's algorithm gives a partition of a
simplicial complex.
 \vskip 0.4 true cm
 \noindent
  {\it Key words } : Stanley decomposition, squarefree Stanley decomposition,  partition of a simplicial complex.\\
 {\it 2000 Mathematics Subject Classification}: Primary 13P10, Secondary
13H10, 13F20, 13C14.\\
\end{abstract}

\section{Introduction}

Let $K$ be a field, $S = K[x_1, . . . ,x_n]$ the polynomial ring in
$n$ variables. Let $u \in S$ be a monomial and $Z$ a subset of
$\{x_1, . . . ,x_n\}$. We denote by $uK[Z]$ the $K$-subspace of $S$
whose basis consists of all monomials $uv$ where $v$ is a monomial
in $K[Z]$. The $K$-subspace $uK[Z] \subset S$ is called a Stanley
space of dimension $|Z|$. Stanley decomposition has been discussed
in various combinatorial and algebraic contexts
 see \cite{AP}, \cite{AP1}, \cite{Ap}, \cite{HJY}, \cite{HT}, \cite{MS} and \cite{Ra}.  \\

Let $I \subset S$ be a monomial ideal, and denote by $I^c \subset S$
the $K$-linear subspace of $S$ spanned by all monomials which do not
belong to $I$. Then $S = I^c \oplus I$ as a $K$-vector space, and
the residues of the monomials in $I^c$ form a $K$-basis of $S/I$.
 One way to obtain the Stanley decomposition for $S/I$ is prime filtration
for instance see proof of \cite[Theorem 6.5]{HP}, but not all the
Stanley decompositions can be obtained from prime filtrations see
\cite{MS} and \cite{Ja}.\\

Let $\Delta$ be a simplicial complex of dimension $d-1$ on the
 vertex set $V = {x_1, . . . ,x_n}$. A subset $\mathcal{I} \in \Delta$
 is called an interval, if there exists faces
$F,G \subset \Delta$ such that $\mathcal{I} = \{H \in \Delta : F
\subseteq H \subseteq G\}$. We denote this interval given by $F$ and
$G$ also by $[F,G]$ and call $dim\ (G) - dim\ (F)$ the rank of the
interval. A partition $\mathcal{P}$ of $\Delta$ is a presentation of
$\Delta$ as a disjoint union of intervals. The $r$-vector of
$\mathcal{P}$ is the integer vector $r = ( r_0, r_1, . . . , r_d ) $
where $r_i$ is the number of intervals of rank $i$. Let $\Delta$ be
a simplicial complex and $\mathcal{F}(\Delta)$ its set of facets.
Stanley calls a simplicial complex $\Delta$ partitionable if there
exists a partition $\Delta = \bigcup_{i=1}^r [F_i,G_i]$ with
$\mathcal{F}(\Delta) = \{G_1, . . . ,G_r\}$. We call a partition
with this property a nice partition. If a Cohen Macaulay simplicial
complex $\Delta$ is  partitionable then the square free ideal
$I_{\Delta}$ will be a Stanley ideal see \cite[corollary
3.5]{HJY}.\\

 We have described the Janet's algorithm to obtain the
Stanley decomposition of a monomial ideal $I$. More importantly to
obtain a square free Stanley decomposition of  $I^c$
 for  a square free ideal $I$ see Lemma \ref{12}. When we have an algorithm for the
  squarefree Stanley decomposition of $I^c$ then from \cite[Proposition
 3.2]{HJY} we get a motivation to develop the Janet's algorithm
 for the partition of a simplicial complex $\Delta$ see Lemma \ref{23}.\\

Here I would like to give a short description on the history of this
subject. The French mathematician Maurice Janet presented an
algorithm to construct  a  special basis (Janet's Basis) for a
finitely generated module over $K<\partial_1,...,\partial_n >$
(where $K$ is a differential field and $\partial_i$'s are partial
derivatives) after a longer visit to Hilbert in G\"{o}ttingen in the
early twenties of the last century, cf.\cite{Jn}, \cite{Jn1}.
 Independently W. Gr\"{o}bner introduced a device now a days known
 as Gr\"{o}bner basis, to
compute in residue class rings of polynomial rings in the late
thirties, cf.\cite{Gr}, \cite{Gr1}, at that time restricted to the
zero-dimensional case. In the 1960s, Gr\"{o}bner basis techniques to
compute with modules over the polynomial ring had an enormous boom
as a consequence of both, B.Buchberger's thesis constructing
Gr\"{o}bner bases, and the general development of powerful computing
devices. By 1980, F.-O. Schreyer proved that
Buchberger's so called S-polynomial come very close to a Gr\"{o}bner
bases  of the syzygy module. After Janet work has been ignored by
the mathematical community more than fifty years, J.-F. Pommaret,
working on Spencer cohomology, became aware of Janet's work and
pointed out that Janet's algorithm when applied to linear partial
differential equations with constant coefficients is a variant of
Buchberger's algorithm and the Janet bases is a special case of
Gr\"{o}bner bases in this case, though Janet's philosophy is
completely different from Gr\"{o}bner's philosophy. V. Gerdt and
collaborators have shown that Janet's constructive ideas lead to
very effective methods. They created an axiomatic framework for
Janet's approach called involutive division algorithm. For instance,
the Singular package, recently has started to use the Janet or
involutive division algorithm to construct the Gr\"{o}bner bases.\\

\section{Janet's algorithm and Stanley decomposition}

 In this section, I have given a description on the Janet's algorithm for
the Stanley decompositions, note that it is a recursive procedure to
find the Stanley decomposition. Also Janet's algorithm give a unique
Stanley decomposition after fixing the order of the variables.\\

\begin{Lemma}
Let $I\subset S=K[x_1,x_2,...,x_n]$ be a monomial ideal, Janet's
algorithm gives a Stanley decomposition of I.
\end{Lemma}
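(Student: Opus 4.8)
The plan is to argue by induction on the number $n$ of variables, mirroring the recursive structure of the algorithm once the order $x_1, x_2, \dots, x_n$ has been fixed. For $n=1$ a nonzero monomial ideal is $I=(x_1^a)$, and the algorithm returns the single Stanley space $x_1^a K[\{x_1\}]$, which is exactly $I$; this settles the base case (with $I=0$ giving the empty decomposition). For the inductive step I would isolate the first variable $x_1$ and slice $I$ by its $x_1$-degree: for each $j\ge 0$ let $I_j\subset T:=K[x_2,\dots,x_n]$ be the monomial ideal spanned by all monomials $w$ with $x_1^{\,j} w\in I$. Multiplying by $x_1$ shows $I_0\subseteq I_1\subseteq I_2\subseteq\cdots$, so by Noetherianity this chain stabilizes, say $I_a=I_{a+1}=\cdots$; the algorithm is precisely the device that detects this stabilization index $a$ and records, slice by slice, which variables become multiplicative.

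First I would record the resulting $K$-vector space identity
$$ I \;=\; \Bigl(\bigoplus_{j=0}^{a-1} x_1^{\,j}\,I_j\Bigr)\ \oplus\ x_1^{\,a}K[x_1]\cdot I_a, $$
where in the finite part the exponent of $x_1$ is frozen at $j$ and in the tail part $x_1$ is free (the slices $I_j=I_a$ for $j\ge a$ being collapsed into one cone). Each $I_j$ for $0\le j\le a$ is a monomial ideal in the $n-1$ variables $x_2,\dots,x_n$, so by the induction hypothesis the algorithm returns a Stanley decomposition $I_j=\bigoplus_k v_{jk}K[Z_{jk}]$ with $Z_{jk}\subseteq\{x_2,\dots,x_n\}$. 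Substituting these in gives, for $j<a$, the Stanley spaces $x_1^{\,j}v_{jk}K[Z_{jk}]$ with $x_1\notin Z_{jk}$, and for the tail the Stanley spaces $x_1^{\,a}v_{ak}K[Z_{ak}\cup\{x_1\}]$ with $x_1$ now multiplicative. This is exactly the list the algorithm outputs, so it remains only to check that it is a Stanley decomposition of $I$.

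The verification splits into coverage and directness. For coverage, an arbitrary monomial $m\in I$ factors as $m=x_1^{\,j}w$ with $w\in T$ and $w\in I_j$; if $j<a$ it lands in the $j$-th block, while if $j\ge a$ then $w\in I_j=I_a$ and $m=x_1^{\,a}(x_1^{\,j-a}w)$ lands in the tail, so the spaces cover $I$. For directness I would separate by $x_1$-degree: the block $x_1^{\,j}I_j$ consists of monomials of $x_1$-degree exactly $j<a$, while the tail carries $x_1$-degree $\ge a$, so distinct blocks are disjoint and the tail meets no block; within a single block the inductive decomposition of $I_j$ is already direct, and tensoring with the fixed (resp.\ free) power of $x_1$ preserves directness.

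The step I expect to be the main obstacle is pinning down that the algorithm's bookkeeping really computes the stabilization index $a$ and the correct multiplicative sets at every level of the recursion --- that is, that the rule \emph{freeze $x_1$ below $a$, free it at and above $a$} is exactly the multiplicative-variable assignment the algorithm applies, carried consistently through the nested slices $I_j$. The delicate point is that making $x_1$ free at a transient level $j<a$ would re-cover monomials already assigned to higher slices and destroy directness, so the freezing at transient levels must be shown to be forced by the algorithm. Once that identification is secure, the coverage and directness arguments above are routine, and the induction closes.
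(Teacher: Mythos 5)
Your proposal is correct and follows essentially the same route as the paper: induction on $n$, slicing $I$ by the powers of one distinguished variable (you use $x_1$, the paper uses $x_n$ --- a purely notational difference), stabilizing the ascending chain of slice ideals by Noetherianity (your index $a$ is the paper's $\beta$), freezing the distinguished variable on the transient slices and adjoining it as a free variable on the stable tail. The ``main obstacle'' you flag is moot in the paper's framing, since the paper takes exactly this recursion as the definition of Janet's algorithm; your explicit coverage-and-directness check by $x_1$-degree is in fact slightly more careful than the paper's, which simply asserts the direct sum.
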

\begin{proof}
By Janet's algorithm, we can write
\begin{center}
$I\cap x_n^{k}K[x_1, x_2,...,x_{n-1}]=x_n^{k}I_{k}$
\end{center}
where $I_k\subset K[x_1, x_2,...,x_{n-1}]$ is a monomial ideal and
from construction it is clear that $$I_0\subseteq I_1 \subseteq \ \
. \ \ . \ \ . \ \ I_k\subseteq I_{k+1}\subseteq .\ .$$ Let us define
$$\alpha \ =\  min\{k\ \ | I_k\neq 0 \}$$ and $$\beta \ =\  min\{k\ \ | I_k= I_{\gamma} \ for \ all \ \gamma \geq k  \}$$
there exists such a $\beta$ because $S'=K[x_1,x_2,...,x_{n-1}]$ is
Noetherian so the ascending chain of ideals mentioned above will
stabilize at some point.\\
We will prove it by using induction on $n$. \\ For $n=1$, it is
clear.\\ Suppose all the monomial ideals in
$S'=K[x_1,x_2,...,x_{n-1}]$ has a Stanley decomposition. Now
consider $I\subset S  =K[x_1,x_2,...,x_n]$, from above it is clear
that $$I=\bigoplus_{k} x_n^k I_k $$ where $I_k$ is a monomial ideal
in $S'= K[x_1, x_2,...,x_{n-1}]$ so it has a Stanley decomposition
as $$I_k=\bigoplus_{i_k=1}^{r_k} u_{i_k}K[Z_{i_k}] \ \ \ \ \ \ \ \ \
\ for \ all\ k.$$ Now by Janet's algorithm we have the Stanley
decomposition of $I$ as follows:
$$I=(\bigoplus_{\alpha \leq k < \beta} x_n^k I_k) \ \ \ \bigoplus \ \ \ (\bigoplus_{k\geq \beta}x_n^k I_k )$$
$$I=\bigoplus_{\alpha \leq k < \beta}( \bigoplus_{i_k=1}^{r_k} u_{i_k}x_n^k K[Z_{i_k}]) \ \ \ \bigoplus \ \ \ (\bigoplus_{i_{\beta}=1}^{r_{\beta}} u_{i_{\beta}}x_n^{\beta}K[Z_{i_{\beta}},x_n]),$$
which is a Stanley decomposition of $I$.
\end{proof}
A Stanley space $uK[Z]$ is called a squarefree Stanley space, if $u$
is a squarefree monomial and $supp(u) \subset Z$. Now we will show
that in the case of a square free monomial ideal $I$, Janet's
algorithm gives a square free Stanley decomposition recursively in
the following lemma;

\begin{Lemma}
If $I\subset S=K[x_1,x_2,...,x_n]$ is a square free monomial ideal,
Janet's algorithm gives a square free Stanley decomposition of $I$.
\end{Lemma}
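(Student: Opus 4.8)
The plan is to adapt the inductive proof of the previous lemma, but now track the additional squarefree property at each step of the recursion. The key observation is that the squarefree structure of $I$ interacts rigidly with the exponent $k$ on $x_n$: since $I$ is squarefree, it is generated by squarefree monomials, so when we write $I \cap x_n^k K[x_1,\dots,x_{n-1}] = x_n^k I_k$, the ideals $I_k \subset S' = K[x_1,\dots,x_{n-1}]$ can only be nonzero for $k=0$ and $k=1$. Concretely, no generator of $I$ has $x_n$-degree exceeding $1$, so the ascending chain $I_0 \subseteq I_1 \subseteq \cdots$ collapses to $I_0 \subseteq I_1 = I_2 = \cdots$, where $I_1$ is itself a squarefree ideal in $S'$ and $I_0$ is the squarefree ideal generated by those generators of $I$ not involving $x_n$. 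In the notation of the previous lemma this forces $\alpha \in \{0,1\}$ and $\beta = 1$.

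With the chain truncated, the Stanley decomposition produced by Janet's algorithm specializes to
\[
I = \Bigl(\bigoplus_{i_0=1}^{r_0} u_{i_0} K[Z_{i_0}]\Bigr) \ \bigoplus\ \Bigl(\bigoplus_{i_1=1}^{r_1} u_{i_1} x_n K[Z_{i_1}, x_n]\Bigr),
\]
where the first sum is the decomposition of $I_0$ (the $x_n^0$ part) and the second comes from the stabilized tail at $\beta = 1$. First I would invoke the inductive hypothesis applied to the squarefree ideals $I_0$ and $I_1$ in the smaller ring $S'$ to obtain squarefree Stanley decompositions $I_j = \bigoplus_{i_j} u_{i_j} K[Z_{i_j}]$ with each $u_{i_j}$ squarefree and $\supp(u_{i_j}) \subset Z_{i_j} \subseteq \{x_1,\dots,x_{n-1}\}$. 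The base case $n=1$ is immediate, since a squarefree ideal in $K[x_1]$ is either $0$ or $(x_1)$.

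It then remains to check that both families of Stanley spaces appearing above are squarefree in the sense just defined. For the $x_n^0$ summands this is automatic, as they are unchanged from the decomposition of $I_0$ in $S'$. For the tail summands $u_{i_1} x_n K[Z_{i_1}, x_n]$, the generating monomial is $u_{i_1} x_n$: this is squarefree because $u_{i_1}$ is squarefree in the variables $x_1,\dots,x_{n-1}$ and is multiplied by the single new variable $x_n$ to the first power, and its support $\supp(u_{i_1}) \cup \{x_n\} \subset Z_{i_1} \cup \{x_n\}$ is contained in the enlarged set $Z$. The main obstacle — and the crux of the argument — is justifying that the chain genuinely stabilizes at $\beta = 1$ so that $x_n$ only ever appears to the first power; once that is pinned down, the squarefree property is preserved verbatim by the construction and the rest follows formally from the induction.
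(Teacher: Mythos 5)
Your proposal is correct and takes essentially the same route as the paper: induction on $n$, using squarefreeness of $I$ to show the chain $I_0\subseteq I_1\subseteq\cdots$ stabilizes already at index $1$ (your generator-degree argument is equivalent to the paper's radical argument $\sqrt{u}\,x_n\in I \Rightarrow ux_n\in I$), and then checking that the tail spaces $u_{i_1}x_nK[Z_{i_1},x_n]$ remain squarefree. One small slip worth noting: $\beta$ need not equal $1$ --- for $I=(x_1)\subset K[x_1,x_2]$ one has $I_0=I_1$, so $\beta=0$, which is the paper's case $\alpha=\beta=0$ where the algorithm outputs the merged spaces $\bigoplus_i u_iK[Z_i,x_n]$ rather than your two-block formula; your displayed decomposition is still a valid squarefree Stanley decomposition in that case (and the merged output is even more obviously squarefree), so the slip is harmless, but strictly speaking it is not the algorithm's output there.
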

\begin{proof}
From above lemma, we can write
\begin{center}
$I\cap x_n^{k}K[x_1, x_2,...,x_{n-1}]=x_n^{k}I_{k}$
\end{center}
define $\alpha$ and $\beta$ as above. For any square free monomial
ideal $I$ it is easy to see that $\alpha ,\beta \leq1$ since
$I_1=I_{\gamma}$ for all $\gamma\geq 1$. As we know that
$I_1\subseteq I_{\gamma}$ for $\gamma\geq1$, let us take a monomial
$u\in I_{\gamma}$ then  $u\in K[x_1,x_2, . . . ,x_{n-1}]$
so $ux_n^{\gamma }\in I$ $\Rightarrow $ $\sqrt{u}.x_n\in I$ $\Rightarrow $ $ux_n\in I$, hence $u\in I_1$.\\
We will prove it by using induction on $n$. \\For $n=1$ it is
trivial. Suppose every square free monomial ideal $I$ in
$S'=K[x_1,x_2, .\ . \ . \ ,x_{n-1}]$ has a square free Stanley decomposition.\\
Now take $I\subseteq S=K[x_1,x_2, .\ . \ . \ ,x_{n}]$, by Janet's
algorithm we can write$$I=\bigoplus_{k} x_n^k I_k \ ,$$ where each
$I_k$ is a square free monomial ideal in $S'$ and so,  it has a square free
Stanley decomposition as follows
$$I_k=\bigoplus_{i_k=1}^{r_k} u_{i_k}K[Z_{i_k}]\ , $$
for all $k$. Now by Janet's algorithm we have the Stanley
decomposition of $I$ as follows:
$$I=\bigoplus_{k\geq \alpha}x_n^k I_k $$
 Janet algorithm gives the Stanley decomposition for different cases
 as follows:\\
When $\alpha\neq \beta$ then $\alpha=0$ and $\beta=1$, so the
Stanley decomposition of $I$ will be of the form:
$$I=(\bigoplus_{i_{\alpha}=1}^{r_{\alpha}} u_{i_{\alpha}}K[Z_{i_{\alpha}}])\bigoplus (\bigoplus_{i_{\beta}=1}^{r_{\beta}} u_{i_{\beta}}x_n K[Z_{i_{\beta}},x_n])$$
as $supp (u_{i_{\beta}})\in \ Z_{i_{\beta}}$ $\Rightarrow$ $supp
(u_{i_{\beta}}x_n)\in \{Z_{i_{\beta}} ,x_{n} \}$ and
$u_{i_{\beta}}x_n$ remain square free as $u_{i_{\beta}}$ is square
free in $S'$. Hence it is a square free Stanley decomposition of
$I$.\\
When $\alpha = \beta (\leq 1)$, then Stanley decomposition of $I$
will be
$$I= \bigoplus_{i_{\beta}=1}^{r_{\beta}} u_{i_{\beta}}x_n^{\beta} K[Z_{i_{\beta}},x_n], $$
where  $\beta \leq 1$, this is clearly a square free Stanley decomposition.
\end{proof}

Now we will describe the Janet's algorithm for a squarefree Stanley
decomposition of $I^c$ when $I\subset S=K[x_1,x_2,...,x_n]$ is  a
squarefree monomial ideal.
\begin{Lemma}\label{12}
If $I\subset S=K[x_1,x_2,...,x_n]$ is a square free monomial ideal,
Janet's algorithm gives a square free Stanley decomposition of $I^c$
recursively.
\end{Lemma}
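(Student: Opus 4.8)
The plan is to mirror the decomposition used in the previous two lemmas, but applied to the complement $I^c$ rather than to $I$ itself. First I would record the analogue of the defining identity of Janet's algorithm: writing each monomial of $S$ uniquely as $x_n^k m'$ with $m'\in S'=K[x_1,\dots,x_{n-1}]$, such a monomial lies in $I^c$ precisely when $m'\notin I_k$, where $x_n^k I_k = I\cap x_n^k K[x_1,\dots,x_{n-1}]$ as before. Hence, denoting by $I_k^c$ the complement of $I_k$ inside $S'$, one obtains the $K$-vector space decomposition $I^c=\bigoplus_{k\geq 0}x_n^k I_k^c$.

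Next I would invoke the previous lemma, which showed that for a squarefree $I$ we have $\alpha,\beta\leq 1$ and that each $I_k$ is a squarefree monomial ideal of $S'$. Passing to complements, the chain $I_0\subseteq I_1=I_2=\cdots$ yields $I_0^c\supseteq I_1^c=I_2^c=\cdots$, so the entire decomposition is controlled by just the two ideals $I_0^c$ and $I_\beta^c$. The argument then proceeds by induction on $n$, with the case $n=1$ immediate. For the inductive step I would use the hypothesis to fix squarefree Stanley decompositions $I_k^c=\bigoplus_j v_j K[W_j]$ in $S'$ for the finitely many distinct complements occurring, namely $I_0^c$ and $I_\beta^c$. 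The terms with $0\leq k<\beta$ — at most the single term $k=0$, since $\beta\leq 1$ — are left unchanged, already being squarefree Stanley spaces of $S$ whose support avoids $x_n$. The stabilized tail $\bigoplus_{k\geq\beta}x_n^k I_\beta^c$ I would collapse via $\bigoplus_{k\geq\beta}x_n^k v_j K[W_j]=x_n^\beta v_j K[W_j,x_n]$, exactly as in the proof for $I$.

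I expect the one real point to verify — and the main obstacle — is that this collapsed tail still consists of squarefree Stanley spaces. This is precisely where $\beta\leq 1$ is essential: since $v_j$ is squarefree with $\supp(v_j)\subseteq W_j$ and $x_n\notin W_j$, the monomial $x_n^\beta v_j$ is again squarefree and $\supp(x_n^\beta v_j)\subseteq W_j\cup\{x_n\}$, so $x_n^\beta v_j K[W_j,x_n]$ is a genuine squarefree Stanley space; had $\beta$ exceeded $1$, the factor $x_n^\beta$ would destroy squarefreeness and the claim would fail. Finally I would note that the sum is direct, because the constituent monomials of the distinct pieces have disjoint supports — they are separated both by the unique factorization $m=x_n^k m'$ and by the directness of the chosen decompositions of $I_0^c$ and $I_\beta^c$ in $S'$ — which completes the squarefree Stanley decomposition of $I^c$.
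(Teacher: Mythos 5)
Your proposal is correct and follows essentially the same route as the paper: the decomposition $I^c=\bigoplus_k x_n^k I_k^c$ with the reversed chain of complements, induction on $n$, collapsing the stabilized tail $\bigoplus_{k\geq\beta}x_n^k v_jK[W_j]$ into $x_n^\beta v_jK[W_j,x_n]$, and the observation that $\beta\leq 1$ (from the squarefree case) is what preserves squarefreeness. The only difference is presentational: the paper spells out three cases (C1: $\alpha\neq\beta$; C2: $\alpha=\beta=0$; C3: $\alpha=\beta=1$, where $I_0^c=K[x_1,\dots,x_{n-1}]$ appears as a separate Stanley space), which your uniform treatment of the terms $k<\beta$ and the tail subsumes correctly.
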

\begin{proof}
For a monomial ideal $I\subset S$, we can write
$$I_k^c x_n^k = I^c \cap x_n^k K[x_1,x_2, .\ . \ . \ ,x_{n-1}]=\ \ x_n^k (K[x_1,x_2, .\ . \ . \ ,x_{n-1}]-I_k) ,$$
where $I_k$ is same as above and  we have the inclusions other way around
$$I_0^c\supseteq I_1^c \supseteq \ \ . \ \ . \ \ . \ \ I_k^c\supseteq I_{k+1}^c\supseteq .\ .$$
We will use induction on $n$;\\For $n=1$, it is trivial.\\
Suppose there exist a square free Stanley decomposition of $J^c$ for
a square free monomial ideal $J\subset S'=K[x_1,x_2,...,x_{n-1}]$.\\
Consider $I\subset S=K[x_1,x_2,...,x_{n}]$ be a square free monomial
ideal, by Janet's algorithm$$I^c=\bigoplus_k x_n^k I_k^c \ \ , $$ where
each $I_k^c\subset S'=K[x_1,x_2,...,x_{n-1}]$, it has a square free
Stanley decomposition as
$$I_k^c=\bigoplus_{i_k=1}^{r_k}u_{i_k}K[Z_{i_k}] \ \ \ \ \ \ \ \ \ \ for \ all\ k.$$
Janet algorithm gives the Stanley decomposition for different cases
 as follows:\\
\textbf{ (C1)} When $\alpha \neq \beta$ ($\alpha=0$ and $\beta=1$),
so the Stanley decomposition of $I^c$ will be of the form:
$$I^c=\ (\bigoplus_{i_{\alpha}=1}^{r_{\alpha}} u_{i_{\alpha}}K[Z_{i_{\alpha}}])\ \bigoplus \ (\bigoplus_{i_{\beta}=1}^{r_{\beta}} u_{i_{\beta}}x_n K[Z_{i_{\beta}},x_n])$$
as $supp (u_{i_{\beta}})\in \{Z_{i_{\beta}}\}$ $\Rightarrow$ $supp
(u_{i_{\beta}}x_n)\in \{Z_{i_{\beta}} ,x_{n}\}$ and
$u_{i_{\beta}}x_n$ remain square free as $u_{i_{\beta}}$ is square
free in $S'$. Hence it is a square free Stanley decomposition of
$I^c$.\\
\textbf{(C2)}When $\alpha = \beta = 0$ , the Stanley decomposition
of $I^c$ will be of the form:
$$I^c=\ \bigoplus_{i_{\beta}=1}^{r_{\beta}} u_{i_{\beta}} K[Z_{i_{\beta}},x_n]$$
It is clearly a square free Stanley decomposition of $I^c$.\\
\textbf{(C3)}When $\alpha = \beta =1$ ,  the Stanley decomposition
of $I^c$ will be of the form:
$$I^c=\ K[x_1,x_2,...,x_{n-1}]\bigoplus(\bigoplus_{i_{\beta}=1}^{r_{\beta}} u_{i_{\beta}}x_n K[Z_{i_{\beta}},x_n]) . $$
\end{proof}

This lemma gives a motivation to describe the Janet's algorithm for
the partitions of simplicial complexes.

\section{Janet's algorithm for the partition of simplicial\\
complexes} We will describe the algorithm for the partition of
simplicial complex $\Delta$ on $[n]$ in the view of above lemma.
\begin{Lemma}\label{23}
Janet's algorithm gives a partition of a simplicial complex $\Delta$
on $[n]$ recursively.
\end{Lemma}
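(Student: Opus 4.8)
The plan is to reduce the statement to Lemma~\ref{12} through the standard dictionary between squarefree Stanley decompositions and partitions of simplicial complexes. To $\Delta$ I associate its Stanley--Reisner ideal $I_\Delta\subset S=K[x_1,\dots,x_n]$, the squarefree monomial ideal generated by the monomials corresponding to the nonfaces of $\Delta$; its squarefree complement $I_\Delta^c$ is then spanned precisely by the face monomials of $\Delta$. Under this correspondence a squarefree Stanley space $uK[Z]$ with $\supp(u)\subseteq Z$ lying inside $I_\Delta^c$ encodes exactly the interval $[\supp(u),Z]=\{H\in\Delta:\supp(u)\subseteq H\subseteq Z\}$, so that a squarefree Stanley decomposition of $I_\Delta^c$ is the same datum as a partition of $\Delta$ into intervals, cf.\ \cite[Proposition 3.2]{HJY}. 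First I would record this dictionary carefully, in particular checking that every set between $\supp(u)$ and $Z$ is genuinely a face of $\Delta$, which is forced by $uK[Z]\subseteq I_\Delta^c$.

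Next I would run the Janet recursion of Lemma~\ref{12} on $I_\Delta$ and read off what each ingredient means for $\Delta$. Splitting monomials by the exponent of $x_n$ produces the ideals $I_0=I_\Delta\cap S'$ and $I_1$, where $S'=K[x_1,\dots,x_{n-1}]$, and I would identify $I_0^c$ with the face monomials of the deletion $\Delta'=\Delta|_{[n-1]}$ and $I_1^c$ with the face monomials of the link $\link_\Delta(x_n)$, using that $x_nu\in I_\Delta$ iff $\supp(u)\cup\{x_n\}\notin\Delta$, i.e.\ iff $\supp(u)\notin\link_\Delta(x_n)$. By induction on $n$ the algorithm partitions both $\Delta'$ and $\link_\Delta(x_n)$ into intervals, and the inductive step is to reassemble these into a partition of $\Delta$.

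The reassembly follows the three cases of Lemma~\ref{12}. In case (C1), where $\Delta'$ has nonfaces and the link differs from the deletion, I keep the intervals of $\Delta'$ unchanged and take the intervals of $\link_\Delta(x_n)$ with $x_n$ adjoined to both endpoints, i.e.\ $[F,G]\mapsto[F\cup\{x_n\},G\cup\{x_n\}]$; these account respectively for the faces of $\Delta$ avoiding and containing $x_n$. Case (C3), where $\Delta'$ is the full simplex on $[n-1]$, is identical except that the contribution of $\Delta'$ collapses to the single interval $[\emptyset,[n-1]]$. Case (C2) is the cone situation $\link_\Delta(x_n)=\Delta'$, where instead I enlarge each interval $[F,G]$ of $\Delta'$ to $[F,G\cup\{x_n\}]$, a single interval that simultaneously captures the face $H$ and the face $H\cup\{x_n\}$ for every $F\subseteq H\subseteq G$.

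The main point to verify, and where I expect the real work to lie, is that the output is an honest partition: the intervals produced must be pairwise disjoint and must cover $\Delta$. Sorting faces by whether $x_n\in H$, the disjointness and coverage in cases (C1) and (C3) reduce directly to the inductive hypotheses for $\Delta'$ and $\link_\Delta(x_n)$. The delicate point is case (C2), where I must check that enlarging the top face by $x_n$ keeps the intervals pairwise disjoint while now covering both the $x_n$-free and the $x_n$-containing faces at once; this is exactly the content of the identity $I_0=I_1$. Once this bookkeeping is in place the induction closes, and Janet's algorithm delivers the desired partition of $\Delta$.
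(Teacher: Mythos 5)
Your proposal is correct, and its combinatorial core coincides exactly with the paper's proof: the paper also inducts on $n$, splits $\Delta$ by the last vertex into the deletion $\Delta_0=\Delta\cap\Delta_{[n-1]}$ and the link $\Delta_1$ (written there as $n\Delta_1=\Delta\cap n\Delta_{[n-1]}$), and reassembles via the same three cases you list --- your (C1), (C2), (C3) match the paper's conditions $\Delta_0\neq\Delta_1,\ \Delta_0\neq\Delta_{[n-1]}$; $\Delta_0=\Delta_1$; and $\Delta_0=\Delta_{[n-1]}$, producing literally the same intervals $[F,G]$, $[F\cup\{n\},G\cup\{n\}]$, $[F,G\cup\{n\}]$, and $[\emptyset,[n-1]]$. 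Where you differ is in the packaging: the paper proves the lemma directly on the simplicial complex as a self-contained induction and never invokes Lemma~\ref{12} in the proof (the Stanley--Reisner connection via \cite[Proposition 3.2]{HJY} is cited only as motivation in the introduction), whereas you turn that motivation into an actual reduction, running Lemma~\ref{12} on $I_\Delta$, identifying $I_0$ and $I_1$ with the Stanley--Reisner ideals of the deletion and of $\link_\Delta(x_n)$, and transporting the squarefree Stanley decomposition of $I_\Delta^c$ through the dictionary $uK[Z]\leftrightarrow[\supp(u),Z]$. Your route buys something real: once the dictionary is established, disjointness and coverage of the intervals are inherited automatically from the directness of the sum $I_\Delta^c=\bigoplus u_iK[Z_i]$, so the case-(C2) bookkeeping you rightly flag as delicate is subsumed by the already-proved identity $I_0=I_1$ in Lemma~\ref{12}; the paper's direct proof, by contrast, needs no Stanley--Reisner machinery but also leaves those disjointness/coverage verifications implicit rather than reducing them to an earlier lemma. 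Both arguments are sound, and yours has the additional merit of making precise why the simplicial algorithm is the "same" algorithm as the one for $I^c$.
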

\begin{proof}
For any simplicial complex $\Delta $ on $[n]$, we can write
$$\Delta_{0} = \Delta \cap \Delta_{[n-1]}$$
$$n\Delta_{1} = \Delta \cap n\Delta_{[n-1]} $$
where  $\Delta_{[n-1]}= [\emptyset \ ,\ \{123...(n-1) \}]$ and \
$n\Delta_{[n-1]}$ is the interval $\Delta_{[n-1]}$ shifted
with $n$, namely $n\Delta_{[n-1]}=[n \ , \ \{12..(n-1)n\}] $. \\
 It should be noted that
$\Delta_{0}$ and $\Delta_{1}$ are the simplicial complexes on $[n-1]$.
We use induction on $n$.\\
For $n=1$, there is nothing to prove.\\
Suppose the result holds for $n-1$ i.e, every simplicial complex in
$[n-1]$ has a computed partition.\\
Consider $\Delta$ on $[n]$, by the Janet's algorithm
$$\Delta =\Delta_{0}\ \sqcup \ n\Delta_{1}$$
where $\Delta_{0}$ and $\Delta_{1}$ are the simplicial complexes on
$[n-1]$, so there exist their partitions:
$$\Delta_{0} \ = \ \bigsqcup_{i_0=1}^{r_0}[F_{i_0}\ ,\ G_{i_0}]$$
 $$\Delta_{1} \ = \ \bigsqcup_{i_1=1}^{r_1}[F_{i_1}\ ,\ G_{i_1}].$$
 Janet's algorithm gives the partition of $\Delta$ for different
 cases as follows:\\
 \textbf{(C1)} When $\Delta_{0}\ \neq \ \Delta_{1}$ and $\Delta_{0}\ \neq \ \Delta_{[n-1]}$, then the
 partition of $\Delta$ will be of the form
 $$\Delta \ = \ (\bigsqcup_{i_0=1}^{r_0}[F_{i_0}\ ,\ G_{i_0}])\ \bigsqcup \ (\bigsqcup_{i_1=1}^{r_1}[nF_{i_1}\ ,\ nG_{i_1}]).$$  \textbf{(C2)} When $\Delta_{0}\ = \ \Delta_{1}$, then the
 partition of $\Delta$ will be of the form
 $$\Delta \ = \ (\bigsqcup_{i_0=1}^{r_0}[F_{i_0}\ ,\ nG_{i_0}]).$$
\textbf{(C3)} When $\Delta_{0}\ = \ \Delta_{[n-1]}$, then the
 partition of $\Delta$ will be of the form
 $$\Delta \ = \ [\emptyset \ ,\ \{123...(n-1)\}]\ \bigsqcup \ (\bigsqcup_{i_1=1}^{r_1}[nF_{i_1}\ ,\ nG_{i_1}]).$$
 \end{proof}
The following example shows how the  Janet's algorithm works to
compute the partition of a simplicial complex $\Delta$.
\begin{Example}

 \em{Let $\Delta$ be a simplicial complex given by the facets;
 $$\Delta=<\{124\}, \{126\},  \{135\}, \{143\}, \{156\}, \{245\}, \{236\}, \{235\}, \{346\}, \{456\} >$$
  Now by applying the Janet's algorithm,\\
  $$\Delta_0\ =\ \Delta \cap \Delta_{[5]} =\ <\{124\}, \{135\}, \{143\}, \{245\}, \{235\}>$$
$$6\Delta_1=\ \Delta \cap  6\Delta_{[5]} =\ < \{126\}, \{156\}, \{236\}, \{346\}, \{456\} >$$

Now consider $\Delta_0$ in $[5]$, we will use the Janet's algorithm
to find its partition.
$$\Delta_0\ =\ <\{124\}, \{135\}, \{143\}, \{245\}, \{235\}>$$
by applying Janet's algorithm,
$$\Delta'_{00}\ =\ \Delta_0 \cap \Delta_{[4]} =\ <\{124\}, \{143\},  \{23\}>$$
$$5\Delta'_{01}\ =\ \Delta_0 \cap 5\Delta_{[4]} =\ < \{135\},  \{245\}, \{235\}>$$
Partition of $\Delta'_{00}$ will be as follows;
$$\Delta'_{00}\ =\ [\emptyset  ,\{124\}]\sqcup [\{3\} , \{143\}]\sqcup [ \{23\},\{23\}]$$
Partition of $\Delta'_{01}=< \{13\},  \{24\}, \{23\}>$ will be as
follows;
$$\Delta'_{01}\ =\ [\emptyset, \{13\}]\sqcup [\{4\}, \{24\}] \sqcup [\{2\} \{23\}]$$
Hence the partition of $\Delta_0$ by Janet's algorithm is as
follows;
$$\Delta_0\ =\ [\emptyset  ,\{124\}]\sqcup [\{3\} , \{143\}]\sqcup [ \{23\},\{23\}]\sqcup [\{5\}, \{135\}]\sqcup [\{45\}, \{245\}] \sqcup [\{25\} \{235\}]$$
\smallskip
Now consider $\Delta_1$ in $[5]$, we will use the Janet's algorithm
to find its partition.
$$\Delta_1=< \{12\}, \{15\}, \{23\}, \{34\}, \{45\} >$$
by applying Janet's algorithm,
$$ \Delta'_{10}=\ \Delta_1 \cap \Delta_{[4]}\ \ \ and\ \ \ 5\Delta'_{11}=\ \Delta_1 \cap 5\Delta_{[4]}$$
$$\Delta'_{10}=< \{12\}, \{23\}, \{34\} >\ \ \ \ \Rightarrow \ \ \ \Delta'_{10}= [\emptyset , \{12\}] \sqcup [ \{3\} , \{23\}] \sqcup [\{4\} , \{34\}]$$
$$5\Delta'_{11}=<  \{15\},  \{45\} > \ \ \ \ \ \ \ \Rightarrow  \ \ \ \ \ \ \ \ \Delta'_{11}= [\emptyset , \{1\}] \sqcup [ \{4\} , \{4\}] $$
Hence the partition of $\Delta_1$ by Janet's algorithm is as
follows;
$$\Delta_1= [\emptyset , \{12\}] \sqcup [ \{3\} , \{23\}] \sqcup [\{4\} , \{34\}]\sqcup [\{5\} , \{15\}] \sqcup [ \{45\} , \{45\}]$$
consequently, we have the partition of $\Delta$
$$\Delta\ =\ [\emptyset  ,\{124\}] \sqcup [\{3\} , \{143\}]\sqcup [ \{23\},\{23\}]\sqcup [\{5\}, \{135\}]\sqcup
 [\{45\}, \{245\}] \sqcup [\{25\} \{235\}]$$ $$\sqcup \  [\{6\} , \{126\}] \sqcup [ \{36\} , \{236\}] \sqcup [\{46\} , \{346\}]\sqcup [\{56\} , \{156\}] \sqcup [ \{456\} , \{456\}]\ \ \ \Box $$}

\end{Example}

\begin{Remark}
\em{In the above example, it is clear that the partition obtained from Janet's algorithm is not a $nice\ partition$.
Note that $\Delta$ in the above example is in fact the simplicial complex given by the triangulation of the real projective plane and it has a $nice\ partition$ see \cite[Example 22]{Po}. So it is not possible to obtain always a $nice\ partition$ by Janet's algorithm.}
\end{Remark}

 \vspace{1 pt}

\end{document}